\newcommand{\C}{\mathbb{C}}
\newcommand{\CO}{\mathcal{O}}
\newcommand{\CU}{\mathcal{U}}
\let\d=\partial
\def\e{\mathrm{e}}
\let\eps=\varepsilon
\DeclareMathOperator{\diag}{diag}
\def\lmin{\lambda_{\rm min}}
\def\MVM{\mathrm{MVM}}
\let\phi=\varphi
\def\quark{\setbox0\hbox{$x$}\hbox to\wd0{\hss$\cdot$\hss}}
\newcommand{\R}{\mathbb{R}}
\newcommand{\Rp}{\R^p}
\newcommand{\Rpp}{\R^{p\times p}}
\newcommand{\T}{\mathbb{T}}
\def\VM{\mathrm{VM}}
\begin{document}

\title{Some Fundamental Properties of a \\ Multivariate von Mises Distribution}
\author{Kanti V. Mardia and Jochen Voss}
\date{17th February 2012}
\maketitle

\footnotetext[1]{Department of Statistics, University of Leeds, Leeds,
LS2 9JT, United Kingdom}
\footnotetext[2]{corresponding author}

\begin{abstract}
  In application areas like bioinformatics multivariate distributions
  on angles are encountered which show significant clustering.  One
  approach to statistical modelling of such situations is to use
  mixtures of unimodal distributions.  In the literature
  \citep{MaKeZhaTayHa}, the multivariate von Mises distribution, also
  known as the multivariate sine distribution, has been suggested for
  components of such models, but work in the area has been hampered by
  the fact that no good criteria for the von Mises distribution to be
  unimodal were available.  In this article we study the question
  about when a multivariate von Mises distribution is unimodal.  We
  give sufficient criteria for this to be the case and show examples
  of distributions with multiple modes when these criteria are
  violated.  In addition, we propose a method to generate samples from
  the von Mises distribution in the case of high concentration.
\end{abstract}

\begin{keywords}
\textbf{keywords:} bioinformatics, directional distributions, mixture
models, modes, simulation, sine distribution
\end{keywords}

\section{Introduction}

In biochemistry it is well known that the structure of macro-molecules
such as proteins, DNA, and RNA can be described in terms of
conformational angles.  For proteins, these angles could be the
dihedral and bond angles describing the conformation of the backbone
together with additional angles for the configuration of the
side chains \citep[see \textit{e.g.}][]{BraToo98}.  Data sets consist of
the angles to describe each monomer in a macro-molecule, the number of
angles required to give the conformation of a monomer determines the
dimensionality of the problem.  In non-coding RNA there can be 7 or~8
dihedral angles of importance per amino acid \citep{FrelsenEtAl09}
and, if the side chains angles are included, many angles are required
for amino acids in proteins \citep[\textit{e.g.}][]{HarderEtAl10}.
The resulting distributions on angles are multivariate, often highly
structured, featuring various modes together with regions excluded by
steric constraints \citep[\textit{e.g.}][]{MaKeZhaTayHa}.

One way to approach the statistical modelling of such multimodal,
multivariate distributions is to use mixture models
with unimodal components.  In the Euclidean
space $\R^p$, an obvious choice for the components is
to use normal distributions with appropriately chosen covariance
matrices.  For angular data, as considered in this article, the choice
of component distribution in less clear, but a simple analogue of the
multivariate normal distribution is the multivariate von~Mises
distribution \citep{Mardia08a}.  This distribution is
suggested for mixture modelling in \citet{MaKeZhaTayHa}.  In order for
a mixture model to be a useful description of a multimodal
distribution, it is essential that the component distribution is
unimodal.  In case of the multivariate von Mises distribution,
this constraint excludes some of the parameter range.
Previous work has been
complicated by the problem that no characterisation of
the parameter values corresponding to the unimodal case was available.
To solve this
problem, this article provides sufficient criteria for the
multivariate von
Mises distribution to be unimodal and we show examples of
distributions with multiple modes (where these criteria are violated).
It should be noted that univariate circular distributions are well established
\citep[see, for example,][]{MaJu00} but understanding of multicircular distributions
is still evolving.

The multivariate von~Mises distribution, first introduced
in~\citet{Mardia08a} and also known as the multivariate sine distribution,
is denoted by~$\MVM(\mu,\kappa,\Lambda)$.  It is
a distribution on the torus $\T^p = [0, 2\pi)^p$ and is given by the
density (w.r.t.\ the uniform distribution on angles)
\begin{equation}\label{eq:MVM}
  \phi(\theta;\mu,\kappa,\Lambda)
  = \frac{1}{Z(\kappa,\Lambda)}
    \exp\bigl(
      \kappa^\top c(\theta)
      + \frac12 s(\theta)^\top\Lambda s(\theta)
    \bigr)
\end{equation}
for all $\theta\in\T^p$.  Here $Z(\kappa,\Lambda)$ is the normalisation
constant and we use the abbreviations
\begin{equation*}
c_i(\theta) = \cos(\theta_i-\mu_i), \quad
s_i(\theta) = \sin(\theta_i-\mu_i)
\end{equation*}
for $i=1, \ldots, p$.  The parameters of the distribution are the ``mean''
$\mu\in\T^p$, the ``concentration parameter'' $\kappa\in\Rp$ with
$\kappa_i\geq 0$ for $i=1, \ldots, p$ and
$\Lambda=(\lambda_{ij})\in\Rpp$ with $\Lambda^\top=\Lambda$ and
$\lambda_{ii}=0$ for $i=1, \ldots, p$.

From the form of the density it is obvious that whenever $\kappa$ is
``large'' compared to $\Lambda$, the density will have exactly one
maximum (where the vector $c(\theta)$ is approximately aligned with
$\kappa$) and exactly one minimum (where $c(\theta)$ is approximately
aligned with $-\kappa$).  This effect is studied in
section~\ref{S:high} where we give a sufficient criterion for the
distribution to be unimodal.  Conversely, for small $\kappa$ the
quadratic term $s^\top(\theta) \Lambda s(\theta)$ in the
density~$\phi$ dominates and one expects the occurrence of multimodal
distributions.  This situation is studied in section~\ref{S:low} where
we show, by example, that a high number of modes is possible
even in low dimensions.  Finally, in section~\ref{S:sampling}, we give
an algorithm for generating samples of a~$\MVM(\mu,\kappa,\Lambda)$
distribution for the unimodal case.  This will be required as part of
any algorithm to sample from a mixture model with $\MVM(\mu, \kappa, \Lambda)$
components.

\section{High Concentration}
\label{S:high}

In this section we derive a sufficient criterion for the
$\MVM(\mu,\kappa,\Lambda)$ to be unimodal.  Since the exponential
function $\exp$ in the density~\eqref{eq:MVM}
is strictly monotonically increasing and since the
normalisation constant $Z(\kappa,\Lambda)$ does not depend on
$\theta$, it suffices to consider the extrema of
\begin{equation}\label{eq:f}
  f(\theta) = \kappa^\top c(\theta) + \frac12 s(\theta)^\top\Lambda s(\theta)
\end{equation}
instead.  These can be found by setting the partial derivatives
\begin{equation}\label{eq:grad}
  \d_if(\theta)
  = -\kappa_i s_i(\theta)
    + c_i(\theta) \sum_{k=1}^p \lambda_{ik} s_k(\theta)
\end{equation}
equal to~$0$: Since $\T^p$ is a compact, closed manifold, all local
extrema of $f$ are located at $\theta\in \T^p$ with $\d_if(\theta) =
0$ for $i = 1, \ldots, p$, \textit{i.e.}\ at critical points of $f$.

To characterise the critical points of $f$, we consider the second
derivatives
\begin{equation}\label{eq:Hess}
  \d_{ij}f(\theta)
  = -\Bigl(\kappa_i c_i(\theta)
           + s_i(\theta) \sum_{k=1}^p \lambda_{ik} s_k(\theta) \Bigr)\delta_{ij}
    + c_i(\theta) \lambda_{ij} c_j(\theta)
\end{equation}
where $\delta_{ij}$ denotes the Kronecker delta.  If the Hessian
matrix $H_f(\theta) = (\d_{ij}f(\theta))_{i,j}$ at a critical point
$\theta$ is negative definite, $\theta$ is a local maximum of $f$ and
thus of $\phi(\quark;\mu,\kappa,\Lambda)$; if $H_f(\theta)$ is
positive definite, $\theta$ is a local minimum; finally, if
$H_f(\theta)$ has both positive and negative eigenvalues, the point
$\theta$ is a saddle point.

For reference in the arguments below, we note that the biggest
eigenvalue $\lambda_{\mathrm{max}}$ of a symmetric matrix
$A=(a_{ij})\in\Rpp$ satisfies
\begin{equation*}
  \lambda_{\mathrm{max}}
  = \sup_{x\in\Rp, |x|=1} x^\top Ax
  \geq \max_{i=1, \ldots, p} e_i^\top Ae_i
  = \max_{i=1, \ldots, p} a_{ii}
\end{equation*}
where $(e_1, \ldots, e_p)$ denotes the standard basis in~$\R^p$.
In particular, if the Hessian matrix at a critical point $\theta$ has
a positive diagonal element, it has at least one positive eigenvalue
and thus $\theta$ cannot be a local maximum.  Similarly,
the smallest eigenvalue $\lmin$ satisfies $\lmin \leq \min_{i=1,
  \ldots, p} a_{ii}$ and if $H_f(\theta)$ has a negative diagonal
element, $\theta$ cannot be a local minimum.

\begin{proposition}\label{P:0 is max}
  Assume that the matrix
  \begin{equation*}
    P = \diag(\kappa_1, \ldots, \kappa_p) - \Lambda
  \end{equation*}
  is positive definite.  Then the global maximum of $\phi =
  \phi(\quark;\mu,\kappa,\Lambda)$ is attained at $\theta = \mu$ and
  $\phi$ has no other (local) maxima.
\end{proposition}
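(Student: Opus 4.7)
The plan is to show that $\mu$ is the unique local maximum of $f$ (defined in~\eqref{eq:f}) on the compact manifold $\T^p$; since $f$ is continuous on a compact set a global maximum exists and must be a local maximum, so uniqueness of the local maximum gives both claims of the proposition.

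Starting with $\mu$ itself: at $\theta = \mu$ we have $s_i(\mu) = 0$ and $c_i(\mu) = 1$, so~\eqref{eq:grad} shows $\mu$ is a critical point and~\eqref{eq:Hess} yields $H_f(\mu) = -P$. This is negative definite by hypothesis, so $\mu$ is a strict local maximum. Note also that positive definiteness of $P$ forces every diagonal entry $P_{ii} = \kappa_i$ to be strictly positive, a fact I will use repeatedly below.

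To exclude any other local maximum, let $\theta$ be an arbitrary critical point. First, if $c_i(\theta) = 0$ for some $i$ then $s_i(\theta) = \pm 1$, and~\eqref{eq:grad} reduces to $\kappa_i s_i(\theta) = 0$, contradicting $\kappa_i > 0$; hence $c_i(\theta) \neq 0$ for every $i$. Solving~\eqref{eq:grad} for $\sum_k \lambda_{ik} s_k(\theta) = \kappa_i s_i(\theta)/c_i(\theta)$ and substituting into~\eqref{eq:Hess} (using $\lambda_{ii} = 0$) produces the clean identity
\begin{equation*}
  \d_{ii} f(\theta) = -\kappa_i c_i(\theta) - \frac{\kappa_i s_i(\theta)^2}{c_i(\theta)} = -\frac{\kappa_i}{c_i(\theta)}.
\end{equation*}
If $c_i(\theta) < 0$ for some $i$, this diagonal entry of $H_f(\theta)$ is positive, so by the remark preceding the proposition $H_f(\theta)$ has a positive eigenvalue and $\theta$ cannot be a local maximum.

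The only remaining case is $c_i(\theta) > 0$ for every $i$. Here I would rewrite the critical-point equations~\eqref{eq:grad} as the linear system
\begin{equation*}
  \bigl(\diag(\kappa_1/c_1(\theta), \ldots, \kappa_p/c_p(\theta)) - \Lambda\bigr)\,s(\theta) = 0.
\end{equation*}
Since $0 < c_i(\theta) \leq 1$ gives $\kappa_i/c_i(\theta) \geq \kappa_i$, the coefficient matrix differs from $P$ by a nonnegative diagonal matrix, so it is itself positive definite and in particular invertible. Hence $s(\theta) = 0$, which combined with $c_i(\theta) > 0$ forces $\theta = \mu$. The main obstacle in this proof is spotting the two algebraic rewrites — the identity $\d_{ii} f(\theta) = -\kappa_i/c_i(\theta)$ at critical points, and the eigenvector-style form of~\eqref{eq:grad} — after which positive definiteness of $P$ closes the argument almost automatically.
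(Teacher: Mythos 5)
Your proof is correct and follows essentially the same route as the paper's: establish $H_f(\mu)=-P$, use the positive diagonal entry $-\kappa_i/c_i(\theta)$ of the Hessian to rule out critical points with some $c_i(\theta)<0$, and then observe that the critical-point equations take the form $\bigl(\diag(\kappa_i/c_i(\theta))-\Lambda\bigr)s(\theta)=0$ with a coefficient matrix that is $P$ plus a nonnegative diagonal perturbation, hence invertible. No gaps; this matches the paper's argument step for step.
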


\begin{proof}
  For $\theta = \mu$ we get $\nabla f(\mu) = 0$ and $H_f(\mu) = - P$;
  by assumption, this matrix is negative definite and thus $\theta =
  \mu$ is a \emph{local} maximum.  We now show that this is the only
  local, and thus the \emph{global}, maximum of $f$.

  Since $P$ is positive, the smallest eigenvalue $\lmin$ of $P$
  satisfies $0 < \lmin \leq \min_{i=1, \ldots, p} P_{ii} = \min_{i=1,
    \ldots, p}\kappa_i$ and thus we have $\kappa_i > 0$ for $i=1,
  \ldots, p$.  From equation~\eqref{eq:grad} we see that $\d_i
  f(\theta) = 0$ implies $c_i \neq 0$ and consequently $\sum
  \lambda_{ik} s_k = \kappa_i s_i / c_i$.  Substituting this into the
  expression for $\d_{ij}f$ in~\eqref{eq:Hess} we find that the
  Hessian matrix $H_f$ at a critical point has the elements
  \begin{equation*}
    \d_{ij}f(\theta)
    = - \kappa_i \bigl( x_i + \frac{s_i^2}{c_i}\bigr) \delta_{ij} + c_i\lambda_{ij}c_j
    = -\frac{\kappa_i}{c_i}\delta_{ij} + c_i\lambda_{ij}c_j
  \end{equation*}
  where we write $c$ for $c(\theta)$ and $s$ for $s(\theta)$ to
  improve readability.
  If a critical point $\theta$ has $c_i(\theta) < 0$ for an index
  $i\in\{1, \ldots, p\}$, then $H_f(\theta)_{ii} = -\kappa_i/c_i > 0$
  and thus $\theta$ cannot be a local maximum.  Therefore we can
  assume $c_i(\theta) > 0$ for $i = 1, \ldots, p$.

  Using the notation
  \begin{equation}\label{eq:Atheta}
    P(\theta)
    = \diag\bigl(\frac{\kappa_1}{c_1(\theta)}, \ldots,
                            \frac{\kappa_p}{c_p(\theta)}\bigr)
      - \Lambda
  \end{equation}
  we can equivalently re-write the condition $\nabla f(\theta) = 0$ as
  \begin{equation}\label{eq:P-sing}
    P(\theta) s(\theta) = 0.
  \end{equation}
  Since
  \begin{equation*}
    P(\theta) = P +
  \diag\bigl(\kappa_1(\frac{1}{c_1(\theta)}-1), \ldots, \kappa_p(\frac{1}{c_p(\theta)}-1)\bigr)
  \end{equation*}
  is the sum of two positive matrices, it is positive and in
  particular non-singular.  Thus, the only solution
  of~\eqref{eq:P-sing} is $s=0$ which implies that the maximum at
  $\theta = \mu$ is the only critical point with $c_i \geq 0$ for
  $i=1, \ldots, p$.  This completes the proof.
\end{proof}

From the proof of proposition~\ref{P:0 is max} we see that, if
$\theta=\mu$ is the global and thus a local maximum of $\phi$, the
matrix $P$ must be positive {\em semi}-definite, \textit{i.e.} the
positivity condition is almost equivalent to $\phi$ having the global
maximum at~$\mu$.  The following corollary gives a sufficient (but not
necessary) condition for the statement to hold; the given condition is
often easier to verify in practice.  Coincidentally, this stronger
condition allows to also identify the minima of the von~Mises
density~$\phi$.

\begin{corollary}\label{C:diag-dom}
  Assume
  \begin{equation}\label{eq:cond}
    \kappa_i > \sum_{j=1}^p |\lambda_{ij}|
        \qquad \mbox{for all $i=1, \ldots, p$.}
  \end{equation}
  Then the global maximum of $\phi = \phi(\quark;\mu,\kappa,\Lambda)$
  is attained at $\theta = \mu$, the global minimum is at $\theta =
  (\mu_1+\pi, \ldots, \mu_p+\pi)$ and these two points are the only
  (local) extrema of $\phi$.
\end{corollary}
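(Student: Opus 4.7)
The strategy is to reduce the maximum claim to Proposition~\ref{P:0 is max} and then establish the minimum and the uniqueness statement by parallel arguments that all exploit the diagonal-dominance form of~\eqref{eq:cond}. First I would observe that \eqref{eq:cond} forces $\kappa_i > 0$ and, because $\lambda_{ii}=0$, is exactly strict diagonal dominance of $P = \diag(\kappa) - \Lambda$; Gershgorin's circle theorem then gives that $P$ is positive definite, so Proposition~\ref{P:0 is max} yields $\theta = \mu$ as the global maximum and the only local maximum of~$\phi$.

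Next I would verify the minimum directly at $\theta^\star = (\mu_1+\pi,\ldots,\mu_p+\pi)$. At this point $c_i(\theta^\star)=-1$ and $s_i(\theta^\star)=0$, so \eqref{eq:grad} gives $\nabla f(\theta^\star)=0$ and \eqref{eq:Hess} collapses to $H_f(\theta^\star) = \diag(\kappa) + \Lambda$. Under~\eqref{eq:cond} this matrix is again strictly diagonally dominant with positive diagonal, hence positive definite, so $\theta^\star$ is a local minimum.

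It remains to show $\mu$ and $\theta^\star$ are the only local extrema; here I would classify the remaining critical points by the signs of the $c_i(\theta)$. No $c_i$ vanishes at a critical point, because otherwise \eqref{eq:grad} would force $s_i=0$ too, violating $c_i^2+s_i^2=1$, so three cases arise. If all $c_i>0$, the argument in the proof of Proposition~\ref{P:0 is max} forces $s=0$ and hence $\theta=\mu$. If the signs are mixed, the simplified formula $\partial_{ii}f(\theta) = -\kappa_i/c_i$ at a critical point (derived as in the proposition) yields a negative diagonal entry of $H_f(\theta)$ at some index and a positive one at another; by the Rayleigh bounds quoted just before the proposition, $H_f(\theta)$ is indefinite, so $\theta$ is a saddle. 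The remaining case, all $c_i<0$, is the new ingredient: rewriting the critical-point equation $P(\theta)s=0$ as $\bigl(\diag(\kappa_i/|c_i|) + \Lambda\bigr)s = 0$ and using $\kappa_i/|c_i| \geq \kappa_i > \sum_j|\lambda_{ij}|$ shows this matrix is strictly diagonally dominant, hence nonsingular, so $s=0$ and $\theta=\theta^\star$. Since $\theta^\star$ is the only local minimum it is also the global minimum, completing the proof. The main obstacle is precisely this all-negative case: under the weaker hypothesis of Proposition~\ref{P:0 is max} there is no reason for $\diag(\kappa_i/|c_i|) + \Lambda$ to be nonsingular, and this is where the strengthened diagonal-dominance form of~\eqref{eq:cond}, which simultaneously controls $\diag(\kappa)\pm\Lambda$ and all intermediate rescalings, is genuinely used.
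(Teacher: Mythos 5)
Your proof is correct and follows essentially the same route as the paper: Gershgorin/strict diagonal dominance applied to $P$, to $P(\theta)$, and to the Hessian, together with the critical-point identity $\d_{ii}f(\theta)=-\kappa_i/c_i(\theta)$. The only organisational difference is that the paper first shows \emph{all} critical points satisfy $s(\theta)=0$ (using $|\kappa_i/c_i|\geq\kappa_i>r_i$ uniformly over sign patterns) and then classifies the $2^p$ corners, whereas you split by the signs of the $c_i$ and dispose of the mixed-sign case directly via the Rayleigh bound without locating those critical points; both variants are valid and rest on the same estimates.
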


\begin{proof}
  By the Gershgorin theorem \citep[Theorem~6.1.1]{HoJo85}, the
  eigenvalues of~$P$ are contained in the union of the closed discs
  $B(\kappa_i, r_i)\subseteq\C$ with radii $r_i = \sum_{j\neq i}
  |-\lambda_{ij}|$ for $i=1, \ldots, p$.  Since $P$ is symmetric, its
  eigenvalues are real and since we have $\kappa_i > \sum_j
  \bigl|\lambda_{ij}\bigr| = r_i$, all eigenvalues of $P$ are
  positive.  Thus the condition of the proposition is satisfied and
  $\theta=0$ is the global maximum of~$\phi$.

  Similarly, the eigenvalues of the matrix $P(\theta)$
  from~\eqref{eq:Atheta} are contained in the union of the closed
  discs $B\bigl(\kappa_i/c_i(\theta), r_i\bigr)\subseteq\R$ with radii $r_i =
  \sum_{j\neq i} |-\lambda_{ij}|$ for $i=1, \ldots, p$.  Since we have
  \begin{equation*}
    \bigl| \frac{\kappa_i}{c_i(\theta)} \bigr|
    \geq \kappa_i
    > \sum_j \bigl|\lambda_{ij}\bigr|
    = r_i,
  \end{equation*}
  none of the discs contain $0$ and the matrix $P(\theta)$ cannot have
  0 as an eigenvalue.  This shows that all solutions
  of~\eqref{eq:P-sing}, \textit{i.e.}\ the critical points of $f$,
  satisfy $s(\theta) = 0$ and thus $c(\theta) \in \{-1, 1\}^p$.

  To classify the critical points, we consider the Hessian matrix $H_f
  = \bigl(\d_{ij}f(\theta)\bigr)_{ij}$.  Invoking the Gershgorin
  theorem again, the eigenvalues of $H_f$ are contained in the union
  of the closed discs with centres $\d_{ii}f(\theta)$ and radii
  $\sum_{j\neq i} |\d_{ij}f(\theta)|$ for $i=1, \ldots, p$.  Using~\eqref{eq:Hess}
  we have
  \begin{equation*}
    \bigl| \d_{ii}f(\theta) \bigr|
    = \bigl| \kappa_i c_i(\theta) \bigr|
    = \bigl| \kappa_i \bigr|
    > \sum_j \bigl| \lambda_{ij} \bigr|
    = \sum_j \bigl| c_i(\theta)\lambda_{ij}c_j(\theta) \bigr|
    = \sum_{j\neq i} |\d_{ij}f(\theta)|,
  \end{equation*}
  none of these discs contain $0$ and thus the circles corresponding
  to $i$ with $c_i = 1$ and with $c_i = -1$ respectively form two disjoint
  groups.  We can conclude that for each $i$ with $c_i = 1$ the matrix
  $H_f$ has a negative eigenvalue and for each $i$ with $c_i = -1$ the
  Hessian has a positive eigenvalue.  Consequently, $\theta=\mu$ is
  the only local maximum of $f$, $\theta=(\mu_1+\pi, \ldots,
  \mu_p+\pi)$ is the local minimum of $f$ and all other critical
  points are saddle points.
\end{proof}

It is easy to see that the statements about the minimum in
corollary~\ref{C:diag-dom} do not necessarily hold under the weaker
assumption from proposition~\ref{P:0 is max}.  For example, the matrix
\begin{equation*}
  \Lambda = \begin{pmatrix}
    0 & -2 & 2 \\ -2 & 0 & 2 \\ 2 & 2 & 0
  \end{pmatrix}
\end{equation*}
has eigenvalues $-4$, $2$ and $2$.  Thus, for $\kappa=(3,3,3)$ the
matrix $P$ is positive (the eigenvalues are $1$, $1$ and $7$), and the
assumption of proposition~\ref{P:0 is max} is satisfied.  On the other
hand, the Hessian matrix of $f$ at $\theta = (\mu_1+\pi, \mu_2+\pi,
\mu_3+\pi)$ is $H_f(\theta) = \diag(\kappa_1, \kappa_2, \kappa_3) +
\Lambda$ and, since this matrix is not positive semi-definite (the
eigenvalues are $-1$, $5$ and $5$), the minimum of the distribution
cannot be at $(\mu_1+\pi, \mu_2+\pi, \mu_3+\pi)$.

%%%%%%%%%%%%%%%%%%%%%%%%%%%%%%%%%%%%%%%%%%%%%%%%%%%%%%%%%%%%%%%%%%%%%%

\section{Low Concentration}
\label{S:low}

In this section we consider the case of ``small''~$\kappa$.  In this
case the structure of the extrema of a~$\MVM(\mu,\kappa,\Lambda)$
distribution is much more complicated than for the concentrated case.
We illustrate some of the possible scenarios with the help of
examples, starting with the boundary case $\kappa = (0, 0,
\ldots, 0)$ and then considering small but non-zero~$\kappa$.

The following lemma shows that for $\kappa = 0$ the case of a single
global maximum can never occur.

\begin{lemma}\label{L:cube1}
  For $\kappa=0$, the following statements hold:
  \begin{enumerate}
  \item The density of the multivariate von Mises distribution
    $\MVM(\mu,0,\Lambda)$ takes its maximal value on the set
    $\{\frac12\pi, \frac32\pi \}^p \subseteq \T^p$, \textit{i.e.}
    \begin{equation*}
      \sup_{\theta\in\T^p} \phi(\theta;\mu,0,\Lambda)
      = \sup_{\theta\in \{\frac12\pi, \frac32\pi \}^p} \phi(\theta;\mu,0,\Lambda).
    \end{equation*}
  \item If $\theta$ is a maximum, then so is $(\theta_1+\pi, \ldots,
    \theta_p+\pi)$.  In particular the number of isolated maxima of
    $f$ is always even (and thus cannot be~$1$).
  \end{enumerate}
\end{lemma}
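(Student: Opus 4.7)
The plan is first to translate so that $\mu=0$, which is allowed because translation is an isometry of $\T^p$ and the density depends on $\theta$ only through $\theta-\mu$. With $\kappa=0$ the density is proportional to $\exp\bigl(Q(s(\theta))\bigr)$, where $Q(x)=\tfrac12 x^\top\Lambda x$, so maximising $\phi$ is equivalent to maximising $Q$ along the image of $s\colon\T^p\to\R^p$, which is exactly the hypercube $[-1,1]^p$.

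For part 1, the key observation I would use is that $\lambda_{ii}=0$ makes $Q(x)$ affine in $x_i$ for each fixed choice of the other coordinates, with slope $\sum_{k\neq i}\lambda_{ik}x_k$. Hence its maximum over $x_i\in[-1,1]$ is attained at an endpoint $x_i=\pm 1$. Iterating this coordinate by coordinate (or invoking that a function which is separately affine in each variable attains its extrema at the vertices of the enclosing cube), a maximiser $x^\ast$ of $Q$ on $[-1,1]^p$ can always be chosen in $\{-1,1\}^p$. Pulling back through $s$, and using that $s_i(\theta)=\pm 1$ exactly when $\theta_i\in\{\pi/2,3\pi/2\}$, this shows the supremum of $\phi$ is attained at a point of $\{\pi/2,3\pi/2\}^p$, as claimed.

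For part 2, I would exploit the translation $T\colon\theta\mapsto\theta+(\pi,\ldots,\pi)$, which is a fixed-point-free involution on $\T^p$. From $\sin(\alpha+\pi)=-\sin\alpha$ one gets $s\circ T=-s$, and since $Q$ is a quadratic form, $Q(-s)=Q(s)$; consequently $\phi\circ T=\phi$. Because $T$ is a homeomorphism, it sends isolated maxima to isolated maxima, and since it has no fixed point, it pairs each such maximum with a distinct partner, forcing the total count to be even. I anticipate no real obstacle: the substantive content is carried by the vertex principle for part 1 (which relies essentially on the vanishing of the diagonal of $\Lambda$) and by the sign-flip symmetry for part 2; neither step requires heavy computation.
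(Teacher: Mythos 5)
Your proof is correct and follows essentially the same route as the paper's: both arguments rest on the observation that $\lambda_{ii}=0$ makes the exponent affine (hence monotone) in each coordinate direction, so a maximiser can be pushed coordinate-by-coordinate to the grid $\{\tfrac12\pi,\tfrac32\pi\}^p$, and both obtain part~2 from the invariance of $f$ under $\theta\mapsto\theta+(\pi,\ldots,\pi)$. Your reformulation on the cube $[-1,1]^p$ via $x=s(\theta)$ is only a cosmetic repackaging of the paper's direct argument on $\T^p$ using the sign of $\partial_i f$ (it is in fact the device the paper itself uses later, in Lemma~\ref{L:cube} and Example~1), and your added detail that the involution is fixed-point-free correctly fills in the step the paper leaves implicit.
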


\begin{proof}
  Without loss of generality, we can assume $\mu=0$.  Let
  $\theta^*\in\T^p$ be a global maximum of $\phi(\quark;0,0,\Lambda)$.
  As in proposition~\ref{P:0 is max}, this is equivalent to $\theta^*$
  being a maximum of the function~$f$ from equation~\eqref{eq:f}.
  Since we assume $\kappa=0$, the formula for~$f$ simplifies to
  \begin{equation}\label{eq:f-no-kappa}
    f(\theta) = \frac12 s(\theta)^\top\Lambda s(\theta)
  \end{equation}
  and the partial derivatives of~$f$ are given by
  \begin{equation}\label{eq:fprime-no-kappa}
    \d_if(\theta) = c_i(\theta) \sum_{k=1}^p \lambda_{ik} s_k(\theta).
  \end{equation}

  Let $i\in\{1, 2, \ldots, p\}$.  Since $\lambda_{ii} = 0$, the value
  $\sum_{k=1}^p \lambda_{ik} s_k(\theta)$ does not depend on
  $\theta_i$ and thus $\theta_i \mapsto \d_if(\theta)$ can only change
  sign at the points $\theta_i = \frac12\pi, \frac32\pi$.
  Consequently, $\theta_i \mapsto f(\theta)$ changes monotonically
  between the values $\theta_i = \frac12\pi, \frac32\pi$.  Defining
  $\theta^+$ and $\theta^-$ by $\theta^+_i = \frac12\pi$, $\theta^-_i
  = \frac32\pi$, and $\theta^+_j = \theta^-_j = \theta^*_j$ for $j\neq
  i$ this shows that one of the two inequalities $f(\theta^+) \geq
  f(\theta^*) \geq f(\theta^-)$ and $f(\theta^-) \geq f(\theta^*) \geq
  f(\theta^+)$ holds.  Since $\theta^*$ is a global maximum of $f$,
  equality holds in the upper bound and thus either $\theta^+$ or
  $\theta^-$ is also a global maximum.  By repeating this procedure
  for $i=1,2, \ldots, p$ we find a global maximum where each
  coordinate is in the set~$\{ \frac12\pi, \frac32\pi \}$.  This
  completes the proof of the first statement.

  The second statement is a direct consequence of the fact that the
  function $f$ from \eqref{eq:f-no-kappa} is invariant under the map
  $\theta \mapsto \theta + (\pi, \ldots, \pi)$.
\end{proof}

\begin{lemma}\label{L:cube}
  Let $\kappa=0$ and $\Lambda \neq 0$.  Then every global maximum $\theta$ of
  $\phi(\theta;\mu,0,\Lambda)$ satisfies $\|s(\theta)\|_\infty = 1$.
\end{lemma}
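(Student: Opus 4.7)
The plan is to read off the constraint imposed by the first-order condition at a critical point of $f$, combine it with the quadratic form of $f$, and then exploit $\Lambda\neq 0$ to produce a witness with strictly positive value.

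First, I would use the gradient formula \eqref{eq:fprime-no-kappa}. At any critical point $\theta^*$, we have for each $i$
\begin{equation*}
  c_i(\theta^*)\sum_{k=1}^p \lambda_{ik} s_k(\theta^*) = 0,
\end{equation*}
so for every coordinate $i$, either $c_i(\theta^*)=0$ (equivalently, $|s_i(\theta^*)|=1$) or $(\Lambda s(\theta^*))_i = 0$. This dichotomy is the key structural observation.

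Second, I would argue by contradiction: suppose some global maximum $\theta^*$ has $\|s(\theta^*)\|_\infty < 1$. Then $c_i(\theta^*)\neq 0$ for every $i$, so the dichotomy forces $(\Lambda s(\theta^*))_i = 0$ for all $i$, i.e.\ $\Lambda s(\theta^*) = 0$. Substituting into the simplified expression \eqref{eq:f-no-kappa} yields $f(\theta^*) = \tfrac12 s(\theta^*)^\top \Lambda s(\theta^*) = 0$, so the global maximum value of $f$ would be $0$.

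Third, I would derive a contradiction from $\Lambda\neq 0$ by exhibiting a point where $f > 0$. Since $\Lambda$ is symmetric with zero diagonal, $\Lambda\neq 0$ forces some off-diagonal entry $\lambda_{ij}\neq 0$ with $i\neq j$. Because the map $\theta\mapsto s(\theta)$ is surjective onto $[-1,1]^p$, I can pick $\theta\in\T^p$ with $s_i(\theta)=1$, $s_j(\theta)=\sgn(\lambda_{ij})$, and $s_k(\theta)=0$ for $k\notin\{i,j\}$. Then $f(\theta) = \tfrac12 s(\theta)^\top\Lambda s(\theta) = |\lambda_{ij}| > 0$, contradicting $f(\theta^*)=0$. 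Hence every global maximum must have $\|s(\theta^*)\|_\infty = 1$.

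I do not anticipate a hard step: the dichotomy from the first-order condition is immediate given the factored form of $\partial_i f$, and the only ingredient requiring a moment's thought is constructing the explicit witness in the last step — but the zero-diagonal constraint on $\Lambda$ makes the choice of a ``corner plus zeros'' vector automatic.
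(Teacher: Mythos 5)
Your proof is correct, but it takes a genuinely different route from the paper. You work through the first-order conditions: at a critical point the factored gradient $\d_i f = c_i(\theta)\,(\Lambda s(\theta))_i$ forces $\Lambda s(\theta)=0$ whenever no coordinate has $|s_i|=1$, which immediately gives $f(\theta)=\frac12 s^\top\Lambda s=0$; you then contradict this with an explicit ``corner'' witness $s_i=1$, $s_j=\sgn(\lambda_{ij})$ giving $f=|\lambda_{ij}|>0$. The paper instead never touches the critical-point equations for this lemma: it shows the maximal value is positive by picking an eigenvector of $\Lambda$ for a positive eigenvalue (which exists because $\tr\Lambda=0$ and $\Lambda\neq0$), and then uses the homogeneity of the quadratic form --- rescaling $s$ by $c=1/\|s\|_\infty>1$ multiplies $f$ by $c^2$ --- to show directly that any point with $f>0$ and $\|s\|_\infty<1$ is strictly dominated by another point. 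Your version is more elementary (no spectral theory, just an off-diagonal entry) and localises the obstruction at critical points; the paper's scaling argument is slightly more robust in that it exhibits an explicit improvement without needing differentiability or the classification of extrema as critical points, and the same scaling idea is what underlies the reduction to the surface of the cube used in Examples~1 and~2. Both are complete proofs.
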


\begin{proof}
  Since the trace of a matrix equals the sum of its eigenvalues and
  since $\Lambda$ is a non-zero matrix with zero trace, $\Lambda$ must
  have a strictly positive eigenvalue $\lambda$.  Let $x$ be a
  corresponding eigenvalue with $\|x\|_\infty \leq 1$.  Then we can
  find $\theta = (\theta_1, \ldots, \theta_p)$ with $\sin(\theta_i) =
  x_i$ for $i=1,2, \ldots, p$.  This vector $\theta$ satisfies
  \begin{equation*}
    f(\theta) = \frac12 s(\theta)^\top\Lambda s(\theta)
    = \frac{\lambda}{2} s(\theta)^\top s(\theta)
    > 0.
  \end{equation*}
  Consequently the maximal value of~$f$ is strictly positive.

  Now let $\theta\in\T^p$ with $f(\theta) > 0$ and
  $\|s(\theta)\|_\infty < 1$, \textit{i.e.}\
  $\bigl|s_i(\theta)\bigr|<1$ for all $i\in \{1, \ldots, p\}$.  Let $c
  = 1 / \|s(\theta)\|_\infty > 1$ and $\tilde s = c s(\theta)$.  Since
  $\| \tilde s \|_\infty = 1$, we can find $\tilde\theta \in \T^p$
  with $\sin(\tilde\theta_i) = \tilde s_i$ for $i=1,2, \ldots, p$.
  This point satisfies
  \begin{align*}
    f(\tilde\theta)
    = \frac12 \tilde s^\top\Lambda \tilde s
    = c^2 \frac12 s(\theta)^\top\Lambda s(\theta)
    > f(\theta).
  \end{align*}
  Therefore, $\theta$ cannot have been a maximum of~$f$.
\end{proof}

\medskip

\noindent
\textbf{Example~1} ($\kappa = 0$, two isolated modes).
Consider a $\MVM(\mu, 0, \Lambda)$ distribution
with
\begin{equation*}
  \Lambda
  = \begin{pmatrix}
     0 & 1.75 & 0.77 \\
    1.75 & 0 & 0.06 \\
    0.77 & 0.06 & 0
  \end{pmatrix}.
\end{equation*}
Since $\kappa = 0$,
lemma~\ref{L:cube} applies and shows that all maxima of
$\phi(\quark;\mu,0,\Lambda)$ correspond to $\theta$ where
$s(\theta)$ lies on the surface of the cube $Q = [-1,1]^3$.
Thus, we can find
the local extrema of $f$ by first finding the local extrema of $g(s) =
\frac12 s^\top\Lambda s$ on the surface of~$Q$ and then
identifying the corresponding values~$\theta$.  To aid with finding
the maxima of~$g$, figure~\ref{fig:random-cube} shows a plot
of $g$ on the (unwrapped) surface of~$Q$.  In the figure,
    the top-most square
    corresponds to $s_3 = +1$, the centre square to $s_2 = -1$, the
    right-most square to $s_1=+1$ and so on.  One can see that the
    distribution has two modes, corresponding to $s(\theta) =
    (-1,-1,-1)$ and $s(\theta) = (+1,+1,+1)$.

\begin{figure}
  \begin{center}
    \includegraphics{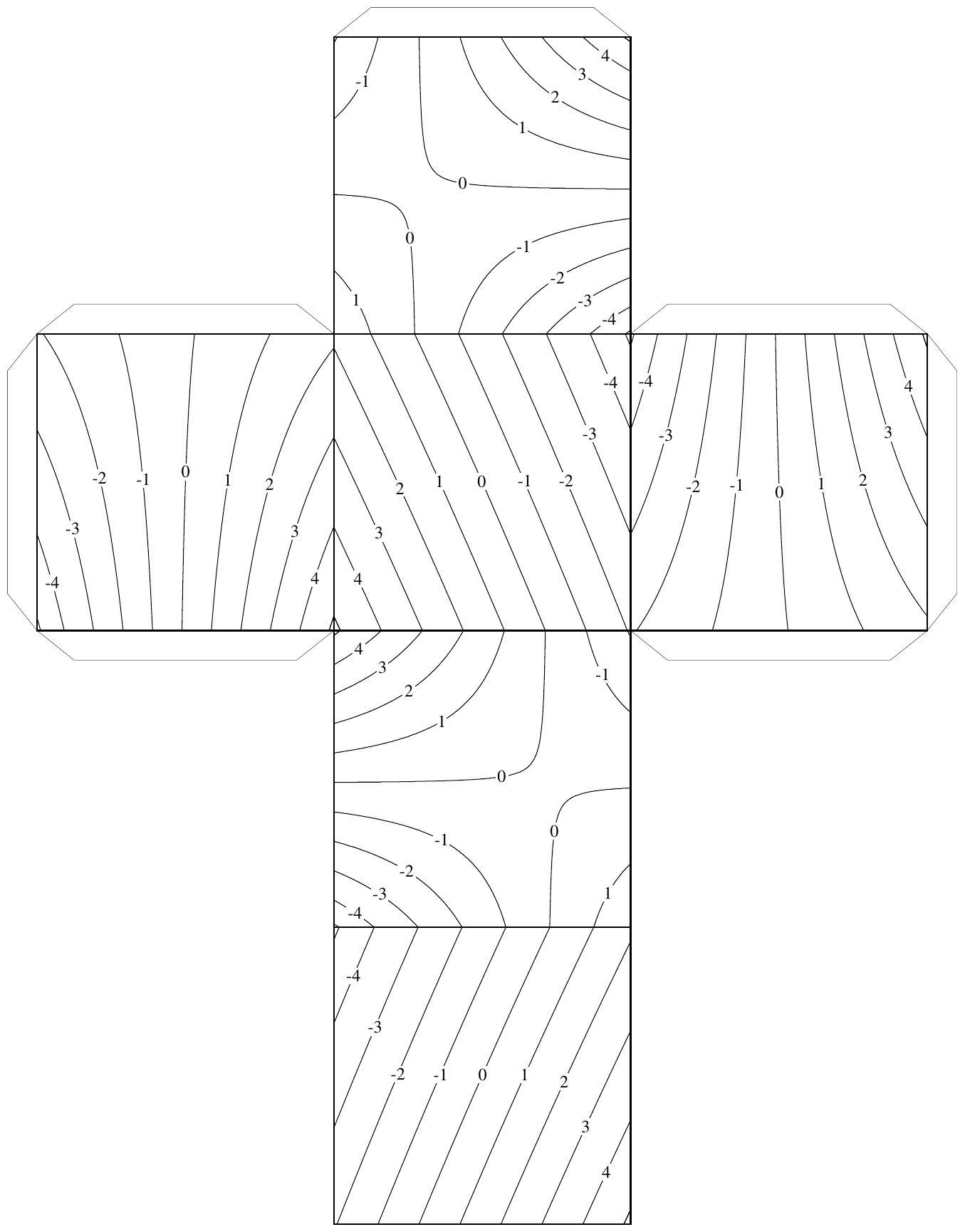}
  \end{center}
  \caption{\label{fig:random-cube}\it
    Visualisation of a von~Mises density from example~1,
    as a function of $s(\theta)$ restricted to the surface of
    the cube~$[0,1]^3$.  The plot shows that the distribution has
    two modes.}
\end{figure}

\medskip

\noindent
\textbf{Example~2} ($\kappa = 0$, one extended mode).
Consider a $\MVM(\mu, 0, \Lambda)$ distribution
with
\begin{equation*}
  \Lambda
  = \begin{pmatrix}
    0 & -1 & 1 \\
    -1 & 0 & 1 \\
    1 & 1 & 0
  \end{pmatrix}.
\end{equation*}
We can find the modes of this distribution in the same way as we did
in example~1, the corresponding plot of~$g$ is shown in
figure~\ref{fig:special-cube}.  The figure shows that the density of
this $\MVM(\mu, 0, \Lambda)$ distribution has an extended maximum
which forms a loop on the surface of the cube.  Figure~\ref{fig:special-cube}
shows the density as a function of~$s(\theta)$.  To give an idea of
the distribution of the corresponding angles~$\theta_1, \theta_2, \theta_3$
themselves, we show
a scatter plot of a sample in figure~\ref{fig:special-scatter}.  While this
(much more conventional) diagram shows the distribution of the sample
clearly, comparison with figure~\ref{fig:special-cube} makes it
clear that the structure of the mode is difficult to
understand from a scatter plot alone.

\begin{figure}
  \begin{center}
    \includegraphics{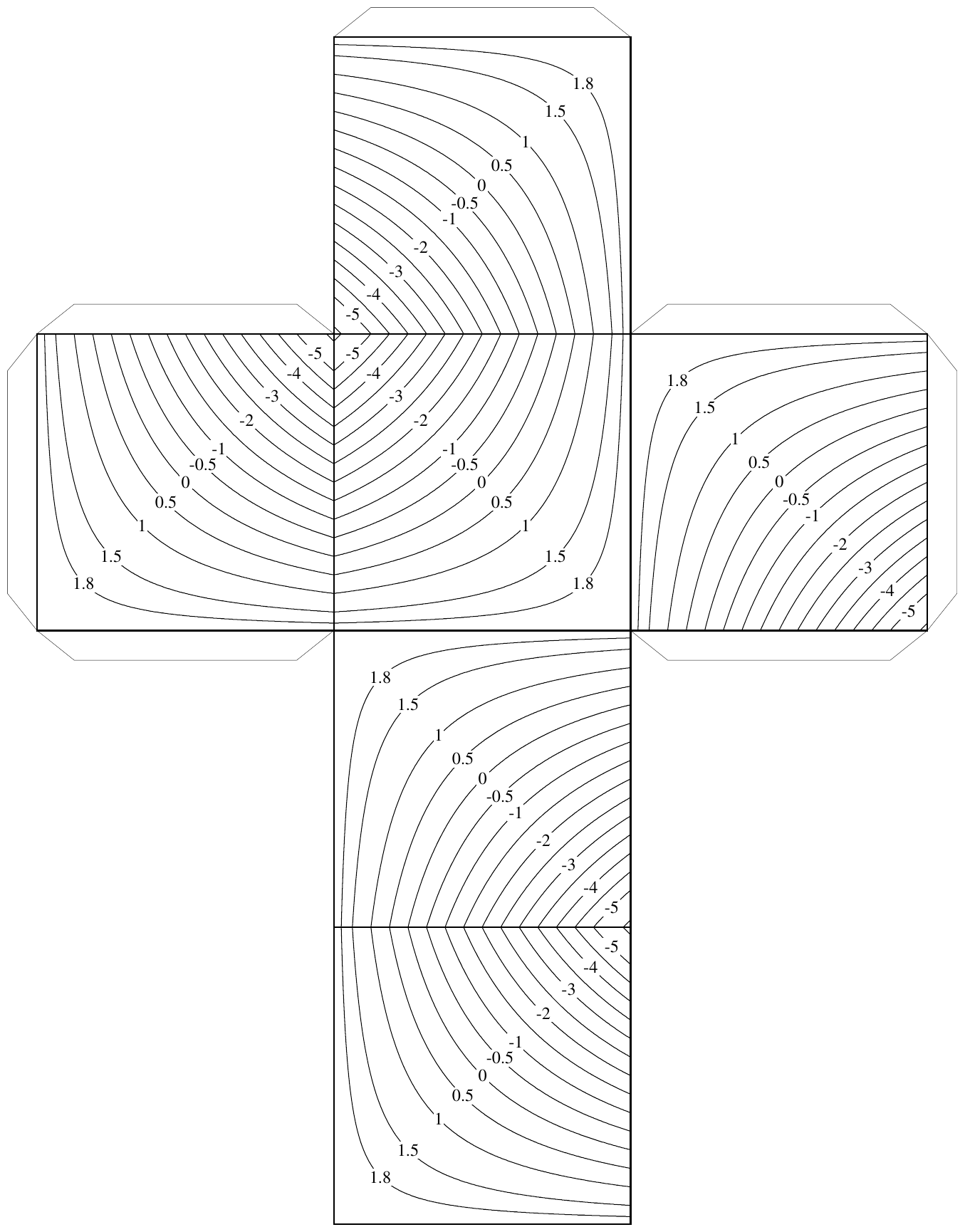}
  \end{center}
  \caption{\label{fig:special-cube}\it Visualisation of a von~Mises
    density from example~2.  One can see that the distribution has an
    extended maximum which loops around the cube in a ``zig-zag
    belt''.}
\end{figure}

\begin{figure}
  \begin{center}
    \includegraphics{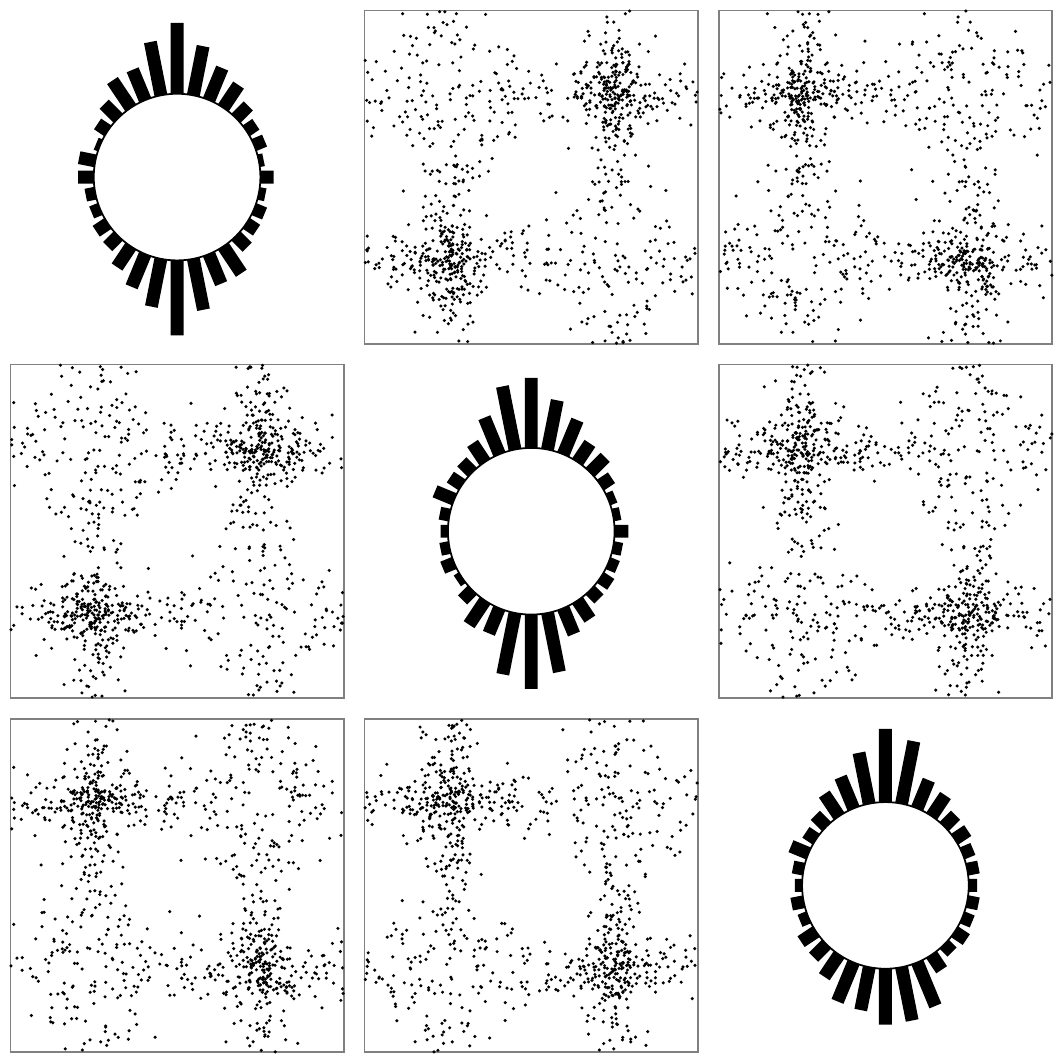}
  \end{center}
  \caption{\label{fig:special-scatter}\it
    Scatter plot of 1000 samples from the distribution from
    example~2.  To make the structure of the
    maximum more visible, the matrix $\Lambda$ was multiplied by 10,
    \textit{i.e.}\ the plotted sample is from a $\MVM(0, 0, 10\,\Lambda)$
    distribution.  The
    regions where the scatter plots have higher
    intensity are not isolated modes of the distribution but are
    artefacts caused by the projection of $\T^3$ onto $\T^2$
    where straight segments of the extended maximum are seen ``head-on''.}
\end{figure}

\bigskip

The case of small, non-zero $\kappa$ can be seen as a perturbation of
the case $\kappa=0$.  Such a perturbation would normally just shift
the extrema of the density around, but the following example shows
that such a perturbation can also break a spatially extended maximum
into a set of isolated maxima, thus increasing the number of modes.

\medskip

\noindent
\textbf{Example~3} ($\kappa>0$, six isolated modes).
The maximum of the von~Mises distribution illustrated in
figure~\ref{fig:special-cube} lives on a ``ring'' formed as the union
of six lines in $\T^3$, aligned with the grid $\{\frac{1}{2} \pi,
\frac{3}{2} \pi\}^3$.  Since the $c_i$ are zero on the grid and take
their maxima between the grid points, we would expect that adding a
perturbation term $\kappa^\top c(\theta)$ with small $\kappa$ will not
only shift these lines, but will also collapse this extended maximum
into a collection of isolated maxima which live on the shifted lines,
at the point where the perturbation was maximal.  The following,
explicit example gives a von~Mises distribution in $\T^3$ with six
isolated maxima.

Let $\eta>0$ and $\eps = \sin(\eta)$.  Define
\begin{equation*}
  \kappa = \begin{pmatrix}
    \eps \\ \eps \\ \eps
  \end{pmatrix},
  \qquad
  \Lambda = \begin{pmatrix}
    0 & -1 & 1 \\
    -1 & 0 & 1 \\
    1 & 1 & 0
  \end{pmatrix}.
\end{equation*}
We will show that, for small enough $\eta$, the function $f(\theta) =
\kappa^\top c(\theta) + \frac12 s(\theta)^\top\Lambda s(\theta)$ has
local maxima at the six points $\theta^1, \ldots, \theta^6\in\T^3$
given by the following table.
\begin{equation*}
  \begin{array}{c|ccc|ccc|ccc}
\ell &
  \theta^\ell_1 & \theta^\ell_2 & \theta^\ell_3 &
  s_1(\theta^\ell) & s_2(\theta^\ell) & s_3(\theta^\ell) &
  c_1(\theta^\ell) & c_2(\theta^\ell) & c_3(\theta^\ell) \\[1.5ex]
1 &
  0 & \frac{3}{2} \pi + \eta & \frac{3}{2} \pi + \eta &
  0 & - \sqrt{1 - \eps^{2}} & - \sqrt{1 - \eps^{2}} &
  1 & \eps & \eps \\
2 &
  \frac{1}{2} \pi - \eta & \frac{3}{2} \pi + \eta & 0 &
   \sqrt{1 - \eps^{2}} & - \sqrt{1 - \eps^{2}} & 0 &
   \eps & \eps & 1 \\
3 &
  \frac{1}{2} \pi - \eta & 0 & \frac{1}{2} \pi - \eta &
  \sqrt{1 - \eps^{2}} & 0 & \sqrt{1 - \eps^{2}} &
  \eps & 1 & \eps \\
4 &
  0 & \frac{1}{2} \pi - \eta & \frac{1}{2} \pi - \eta &
  0 & \sqrt{1 - \eps^{2}} & \sqrt{1 - \eps^{2}} &
  1 & \eps & \eps \\
5 &
  \frac{3}{2} \pi + \eta & \frac{1}{2} \pi - \eta &
  0 & - \sqrt{1 - \eps^{2}} & \sqrt{1 - \eps^{2}} & 0 &
  \eps & \eps & 1 \\
6 &
  \frac{3}{2} \pi + \eta & 0 & \frac{3}{2} \pi + \eta &
  - \sqrt{1 - \eps^{2}} & 0 & - \sqrt{1 - \eps^{2}} &
  \eps & 1 & \eps
  \end{array}
\end{equation*}
For the convenience of the reader, the table also gives the vectors
$s(\theta^\ell)$ and $c(\theta^\ell)$ for $\ell = 1, \ldots, 6$.  By
substituting these values into the formula for $\d_i f$ from
equation~\eqref{eq:grad}, it is easy to check that $\nabla
f(\theta^\ell) = 0$ for $\ell = 1, \ldots, 6$ and thus all six points
are critical points of~$f$.

Substituting the values from the table into the formulas for
$\d_{ij}f$ from \eqref{eq:Hess}, we can compute the value of the
Hessian matrix $H_\ell = H_f(\theta^\ell)$ for $\ell = 1, \ldots, 6$.
The results are as follows:
\begin{small}
\begin{equation*}
H_1 = H_4 = \begin{pmatrix}
  - \eps & - \eps & \eps \\ - \eps & -1 & \eps^{2} \\ \eps & \eps^{2} & -1
\end{pmatrix}, \quad
H_2 = H_5 = \begin{pmatrix}
  -1 & - \eps^{2} & \eps \\ - \eps^{2} & -1 & \eps \\ \eps & \eps & - \eps
\end{pmatrix}, \quad
H_3 = H_6 = \begin{pmatrix}
  -1 & - \eps & \eps^{2} \\ - \eps & - \eps & \eps \\ \eps^{2} & \eps & -1
\end{pmatrix}.
\end{equation*}
\end{small}%
It can be checked that each of these matrices has eigenvalues
$\lambda_1 = -1 + \CO(\eps^2)$, $\lambda_2 = -1 + \CO(\eps^2)$, and
$\lambda_3 = -\eps + \CO(\eps^2)$.  Thus, for small enough $\eps>0$,
all six points are local maxima as required.

\section{Sampling}
\label{S:sampling}

In this section we discuss a simple method to generate samples from a
$\MVM(\mu,\kappa,\Lambda)$ distribution, using the rejection sampling
algorithm \citep[Corollary~2.17]{RoCa04a}.  The method is restricted
to small or moderate $p$, but works well for the case of high concentration.
We assume that the matrix
\begin{equation*}
  P = \diag(\kappa_1, \ldots, \kappa_p) - \Lambda
\end{equation*}
is positive definite.

Without loss of generality we
can assume $\mu=0$, the general case is then obtained by a simple
shift.   We denote the smallest eigenvalue of $P$ by
$\lmin > 0$.
The proposed algorithm uses independent angles $\theta_1,
\theta_2, \ldots, \theta_p$ as proposals, distributed with
density
\begin{equation*}
  g(\theta)
  = \prod_{i=1}^p \frac{\exp\bigl(\frac{\lmin}{4}\cos(2\theta)\bigr)}{2\pi I_0\bigl(\frac{\lmin}{4}\bigr)}.
\end{equation*}
This is the independent product of one-dimensional von Mises distributions,
modified by replacing the angle $\theta$ by~$2\theta$.
Since we can efficiently generate samples $\tilde\theta_i$ from a
one-dimensional von~Mises distribution $\VM(0, \lmin/4)$
\citep[\textit{e.g.}][]{BeFi79}, we can obtain samples from the
density $g$ by taking $\theta_i = \tilde\theta / 2$ with probability
$1/2$ and $\theta_i = \tilde\theta / 2 + \pi$ else.

The target density is the density of the multivariate von Mises
distribution $\MVM(0,\kappa,\Lambda)$, \textit{i.e.}\ it is
proportional to
\begin{equation*}
  f(\theta)
  = \exp\Bigl(
    \kappa^\top c(\theta)
    + \frac12 s(\theta)^\top \Lambda s(\theta)
  \Bigr).
\end{equation*}
Using the inequalities $\cos(\theta) + \sin(\theta)^2 / 2 \leq 1$ and
$s(\theta)^\top P s(\theta) \geq \lmin s(\theta)^\top s(\theta)$,
we find
\begin{align*}
  f(\theta)
  &= \exp\Bigl(
    \kappa^\top c(\theta)
    + \frac12 s(\theta)^\top \Lambda s(\theta)
  \Bigr) \\
  &= \exp\Bigl(
    \sum_{i=1}^p \kappa_i \bigl(c_i(\theta) + \frac12 s_i(\theta)^2\bigr)
    - \frac12 s(\theta)^\top P s(\theta) \Bigr) \\
  &\leq \exp\Bigl(
    \sum_{i=1}^p \kappa_i
    - \frac{\lmin}{2} s(\theta)^\top s(\theta)
  \Bigr).
\end{align*}
Finally, since $\cos(2x) = 1 - 2 \sin(x)^2$, we can rewrite this
expression as
\begin{align*}
  f(\theta)
  &\leq \exp\Bigl( - \frac{p \lmin}{4} + \sum_{i=1}^p \kappa_i \Bigr)
     \cdot \exp\bigl( \frac{\lmin}{4} \sum_{i=1}^p c_i(\theta) \bigr) \\
  &= \exp\Bigl( - \frac{p \lmin}{4} + \sum_{i=1}^p \kappa_i \Bigr)
     \cdot \bigl( 2\pi I_0(\frac{\lmin}{4})\bigr)^p
     \cdot g(\theta) \\
  &=: C g(\theta).
\end{align*}
Thus we have found a constant $C$ with $f \leq C g$ and the rejection
sampling algorithm can be applied.

In the rejection sampling algorithm, a proposal $\theta$ is accepted
with probability $f(\theta) / Cg(\theta)$, \textit{i.e.}\ with
probability
\begin{align*}
  p(\theta)
  &= \frac{\exp\Bigl(
    \kappa^\top c(\theta)
    + \frac12 s(\theta)^\top \Lambda s(\theta)
  \Bigr)}
         {\exp\Bigl(
    \sum_{i=1}^p \kappa_i
    - \frac{\lmin}{2} s(\theta)^\top s(\theta)
  \Bigr)} \\
  &= \exp\Bigl( \sum_{i=1}^p \kappa_i\bigl(c_i - 1\bigr)
    + \frac12 s^\top (\Lambda + \lmin I) s \Bigr)
\end{align*}
where $I$ is the $p\times p$ identity matrix.  Thus, the following
algorithm can be used to generate samples of a
$\MVM(\mu,\kappa,\Lambda)$ distribution when $P$ is positive:

\begin{enumerate}
\item\label{alg:start} Generate random variables
  \begin{align*}
    &\tilde\theta_1, \ldots, \tilde\theta_p \sim \VM(0, \lmin/4) \\
    &\delta_1, \ldots, \delta_n
      \quad\mbox{with $P(\delta_i = 0) = P(\delta_i = \pi) = 1/2$} \\
    &U \sim \CU\bigl([0,1]\bigr),
  \end{align*}
  all independent of each other.
\item Let $s_i = \sin(\theta_i)$ and $c_i = \cos(\theta_i)$ for
  $i=1,2,\ldots, p$.
\item If the condition
  \begin{equation*}
    U \leq \exp\Bigl( \sum_{i=1}^p \kappa_i\bigl(c_i(\theta) - 1\bigr)
    + \frac12 s(\theta)^\top (\Lambda + \lmin I) s(\theta) \Bigr)
  \end{equation*}
  is satisfied, output $\theta = (\theta_1+\mu_1, \theta_2+\mu_2, \ldots,
  \theta_p+\mu_p)$ (\textit{i.e.}\ the proposal is accepted).
\item Return to step~\ref{alg:start}.
\end{enumerate}

We note that the algorithm still works when the eigenvalue $\lmin$ is
replaced by a lower bound $0 < \hat\lambda_{\rm min} \leq \lmin$ for
the eigenvalues of~$P$.  This allows to apply the algorithm in
situations where the eigenvalues of $P$ are not exactly known.

The efficiency of this algorithm is determined by its acceptance
rate: If $Z$ is the normalisation constant which makes $\frac{1}{Z} f$
a probability density, then each proposal is accepted with probability
$Z/C$.  From~\citet[equation~(3)]{MaKeZhaTayHa} we know that, for high
concentration, we have
\begin{equation*}
  Z \approx (2\pi)^{p/2} |P|^{-1/2} \exp\Bigl(\sum_{i=1}^p \kappa_i\Bigr)
\end{equation*}
where $|P|$ is the determinant of the matrix~$P$.  From
\citet[formula~9.7.1]{AbraSte64} we know
\begin{equation*}
  \sqrt{2\pi\kappa} \, \e^{-\kappa} I_0(\kappa) \longrightarrow 1
\end{equation*}
as $\kappa\to\infty$.  Consequently, the asymptotic acceptance
probability for high concentration is
\begin{equation}\label{eq:acceptance-prob}
  \begin{split}
  \frac{Z}{C}
  &\approx \frac{(2\pi)^{p/2} |P|^{-1/2} \exp\bigl(\sum_{i=1}^p \kappa_i\bigr)}%
     {\exp\bigl( - p \lmin/4 + \sum_{i=1}^p \kappa_i \bigr)
     \cdot (2\pi)^{p/2} (4/\lmin)^{p/2} \exp\bigl( p\lmin/ 4\bigr)} \\
  &= \frac{1}{2^p} \cdot \sqrt{\frac{\lmin^p}{|P|}}.
  \end{split}
\end{equation}
The proposed algorithm will be efficient if this probability is not to
small.  Considering the first factor on the right-hand side
of~\eqref{eq:acceptance-prob}, we see that the method only can be
expected to perform well for sufficiently small values of~$p$.  The factor
$1/2^p$ is expected, since the proposal distribution has $2^p$ modes,
whereas the target distribution has only one.  Since the determinant
$|P|$ equals the product of all $p$ eigenvalues of~$p$ (the smallest
of which is $\lmin$), the second factor on the right-hand side
of~\eqref{eq:acceptance-prob} is big, if the eigenvalues of $P$ are
all of the same magnitude, \textit{i.e.}\ if the mode of the
distribution is approximately rotationally symmetric.

\bigskip

\textbf{Acknowledgements.}  The authors wish to thank John Kent for
many helpful discussions.

\bibliographystyle{abbrvnat}
\bibliography{modes}

\begin{thebibliography}{10}
\providecommand{\natexlab}[1]{#1}
\providecommand{\url}[1]{\texttt{#1}}
\expandafter\ifx\csname urlstyle\endcsname\relax
  \providecommand{\doi}[1]{doi: #1}\else
  \providecommand{\doi}{doi: \begingroup \urlstyle{rm}\Url}\fi

\bibitem[Abramowitz and Stegun(1964)]{AbraSte64}
M.~Abramowitz and I.~A. Stegun.
\newblock \emph{Handbook of Mathematical Functions}.
\newblock Dover Publications, 1964.

\bibitem[Best and Fisher(1979)]{BeFi79}
D.~J. Best and N.~I. Fisher.
\newblock Efficient simulation of the von {M}ises distribution.
\newblock \emph{Journal of the {R}oyal {S}tatistical {S}ociety. Series {C}},
  28\penalty0 (2):\penalty0 152--157, 1979.
\newblock URL \url{http://www.jstor.org/stable/2346732}.

\bibitem[Branden and Tooze(1998)]{BraToo98}
C.~I. Branden and J.~Tooze.
\newblock \emph{Introduction to Protein Structure}.
\newblock Garland, second edition, 1998.

\bibitem[Frellsen et~al.(2009)Frellsen, Moltke, M., Mardia, Ferkinghoff-Borg,
  and Hamelryck]{FrelsenEtAl09}
J.~Frellsen, I.~Moltke, T.~M., K.~Mardia, J.~Ferkinghoff-Borg, and
  T.~Hamelryck.
\newblock A probabilistic model of {RNA} conformational space.
\newblock \emph{PLoS Comput. Biol.}, 5\penalty0 (6), 2009.
\newblock \doi{10.1371/journal.pcbi.1000406}.

\bibitem[Harder et~al.(2010)Harder, Boomsma, Paluszewski, Frellsen, Johansson,
  and Hamelryck]{HarderEtAl10}
T.~Harder, W.~Boomsma, M.~Paluszewski, J.~Frellsen, K.~Johansson, and
  T.~Hamelryck.
\newblock Beyond rotamers: a generative, probabilistic model of side chains in
  proteins.
\newblock \emph{BMC Bioinformatics}, 11\penalty0 (306), 2010.
\newblock \doi{10.1186/1471-2105-11-306}.

\bibitem[Horn and Johnson(1985)]{HoJo85}
R.~A. Horn and C.~R. Johnson.
\newblock \emph{Matrix Analysis}.
\newblock Cambridge University Press, 1985.

\bibitem[Mardia and Jupp(2000)]{MaJu00}
K.~V. Mardia and P.~E. Jupp.
\newblock \emph{Directional Statistics}.
\newblock Wiley, 2000.

\bibitem[Mardia et~al.(2008)Mardia, Hughes, Taylor, and Singh]{Mardia08a}
K.~V. Mardia, G.~Hughes, C.~C. Taylor, and H.~Singh.
\newblock A multivariate {v}on {M}ises distribution with applications to
  bioinformatics.
\newblock \emph{The Canadian Journal of Statistics}, 36\penalty0 (1):\penalty0
  99--109, 2008.

\bibitem[Mardia et~al.(2011)Mardia, Kent, Zhang, Taylor, and
  Hamelryck]{MaKeZhaTayHa}
K.~V. Mardia, J.~T. Kent, Z.~Zhang, C.~Taylor, and T.~Hamelryck.
\newblock Mixtures of concentrated multivariate sine distributions with
  applications to bioinformatics.
\newblock Submitted, 2011.

\bibitem[Robert and Casella(2004)]{RoCa04a}
C.~P. Robert and G.~Casella.
\newblock \emph{{M}onte {C}arlo statistical methods}.
\newblock Springer Texts in Statistics. Springer, second edition, 2004.

\end{thebibliography}

\end{document}